\numberwithin{equation}{section}
\newtheorem{theorem}{Theorem}[section]
\newtheorem{prop}[theorem]{Proposition}
\newtheorem{lemma}[theorem]{Lemma}
\theoremstyle{definition}
\newtheorem{definition}[theorem]{Definition}
\newtheorem{example}[theorem]{Example}
\newtheorem{remark}[theorem]{Remark}
\def\<{{\langle}}
\def\>{{\rangle}}
\def\g{{\gamma}}
\def\d{{\delta}}
\def\Z{\mathbb Z}
\def\R{\mathbb R}
\def\S{{\mathbb S}}
\def\C{{\cal C}}
\def\D{{\cal D}}
\def\L{{\cal L}}
\def\La{{\Lambda}}
\def\e{\epsilon}
\def\De{\Delta}
\def\ni{\noindent} 
\begin{document}

\title{Group Presentations for Links in Thickened Surfaces  }

\author{Daniel S. Silver 
\and Susan G. Williams}

\maketitle 


\begin{abstract} Using a combinatorial argument, we prove the well-known result that the Wirtinger and Dehn presentations of a link in 3-space describe isomorphic groups. The result is not true for links $\ell$ in a thickened surface $S \times [0,1]$. Their precise relationship, as given in \cite{By12}, is established here by an elementary argument. When a  diagram in $S$ for $\ell$ can be checkerboard shaded, the Dehn presentation leads naturally to an abelian ``Dehn coloring group," an isotopy invariant of $\ell$. Introducing homological information from $S$ produces a stronger invariant, $\cal C$, a module over the group ring of $H_1(S; {\mathbb Z})$. The authors previously defined the Laplacian modules ${\cal L}_G,{ \cal L}_{G^*}$  and polynomials $\Delta_G, \Delta_{G^*}$ associated to a Tait graph $G$ and its dual $G^*$, and showed that the pairs $\{{\cal L}_G, {\cal L}_{G^*}\}$, $\{\Delta_G, \Delta_{G^*}\}$ are isotopy invariants of $\ell$. The relationship between $\cal C$ and the Laplacian modules is described and used to prove that $\Delta_G$ and  $\Delta_{G^*}$ are equal when $S$ is a torus.

 \bigskip

MSC: 57M25, 05C10 
\end{abstract}

\section{Introduction} Modern knot theory, which began in the early 1900's, was propelled by the nearly simultaneous publications of two different methods for computing presentations of knot groups, fundamental groups of knot complements. The methods are due to W. Wirtinger and M. Dehn. Both are combinatorial, beginning with a 2-dimensional drawing, or ``diagram," of a knot or link, and reading from it a group presentation.

Of course, Wirtinger and Dehn presentations describe the same group. However, the proof usually involves algebraic topology. Continuing in the combinatorial spirit of 
early knot theory, a spirit that has revived greatly since 1985 with the landmark discoveries of V.F.R. Jones \cite{Jo85}, we offer a diagrammatic proof that the two presentations describe the same group. We then extend our technique to knots and links in thickened surfaces. There the presentations describe different groups. We explain their relationship. 

Diagrams of knots and links in surfaces that can be ``checkerboard shaded" carry the same information as $\pm 1$-weighted embedded graphs. Laplacian matrices of graphs, well known to combinatorists, can be used to describe algebraic invariants that we show are closely related to Dehn presentations. 

The first two sections are relatively elementary and should be accessible to a reader with a basic undergraduate mathematics background. Later sections become more sophisticated but require only modest knowledge of modules. 

The authors are grateful to Louis Kauffman for helpful comments, and also Seiichi Kamada for sharing his and Naoko Kamada's early ideas about Wirtinger and Dehn presentations.

\section{Wirtinger and Dehn Link Group Presentations} \label{Intro} 

A link in $\R^3$ is a finite embedded collection of circles $\ell$ regarded up to ambient isotopy. (A \textit{knot} is a special case, a link with a single connected component.) A link is usually described by a \textit{link diagram}, a 4-valent graph embedded in the plane with each vertex replaced by a broken line segment to indicate how the link passes over itself. Following \cite{Ka06} we call the graph a \textit{universe} of $\ell$. 

Two links are isotopic if and only if a diagram of one can be changed into a diagram of the other by a finite sequence of local modifications called Reidemeister moves, as in Figure \ref{reid}, as well as deformations of the diagram that don't create or destroy crossings. (For a proof of this as well as other well-known facts about links see \cite{Mu96}.) The topological task of determining when two links are isotopic now becomes a combinatorial problem of understanding when two link diagrams are equivalent. Moreover, we can use  Reidemeister moves to discover link invariants; they are quantities associated to a diagram that are unchanged by each of the three moves.

\begin{figure}[H]
\begin{center}
\includegraphics[height=1.5 in]{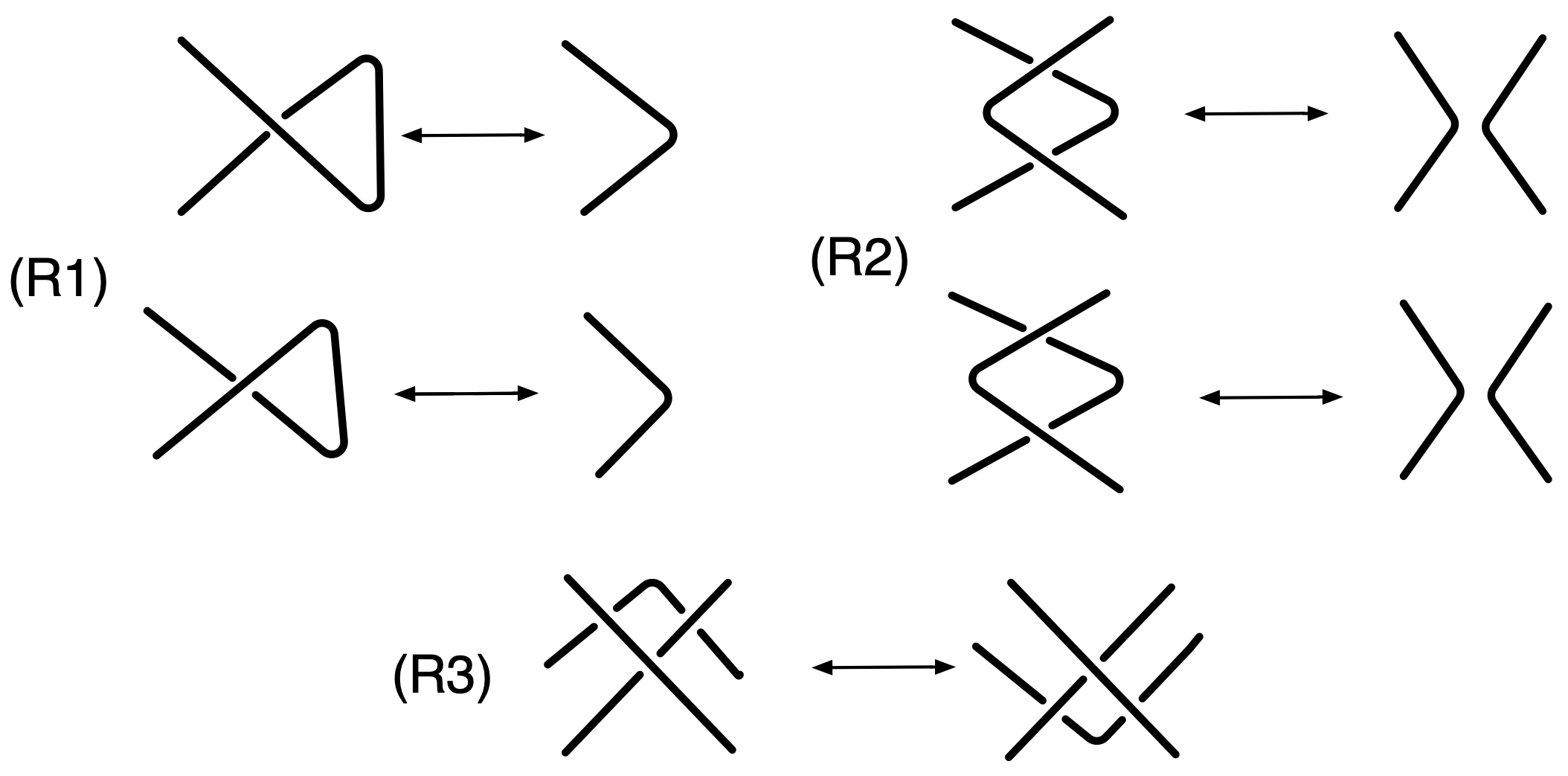}
\caption{Reidemeister moves}
\label{reid}
\end{center}
\end{figure}

The \textit{link group}, the fundamental group $\pi_1(\R^3\setminus \ell)$ of the link complement, is a familiar link invariant. Usually it is described by a group presentation based on a link diagram, the most common being the Wirtinger and the Dehn presentations. In a Wirtinger presentation, which requires that the link be oriented, generators correspond to arcs, maximally connected components of the diagram, while relations correspond to the crossings, as in Figure \ref{wirt}(i). 

We remind the reader that a presentation of a group $\pi$ is an expression of the form $\langle x_1, \ldots, x_n \mid r_1, \ldots, r_m\rangle$, where $x_1, \ldots, x_n$ generate $\pi$ while $r_1, \ldots, r_m$ are \textit{relators}, words in $x_1^{\pm 1}, \ldots, x_n^{\pm 1}$ that represent trivial elements. The group relators are sufficient to describe the group, in the sense that $\pi \cong F/R$, where $F$ is the free group on $x_1, \ldots, x_n$ and $R$ is the normal subgroup of $F$ generated by $r_1, \ldots, r_m$. A group that has such a presentation is said to be \textit{finitely presented}. Often it is more natural to include \textit{relations}, expressions of the form $r=s$, in a presentation rather than relators. Such an expression is another way of writing the relator $rs^{-1}$. 

Just as link diagrams and Reidemeister moves convert the recognition problem for links to a combinatorial task, group presentations and \textit{Tietze transformations} turn the recognition problem for groups into a combinatorial one. Two finitely presented groups are isomorphic if and only one presentation can be converted into the other by a finite sequence of $(T1)^{\pm 1}$ generator addition/deletion and $(T2)^{\pm 1}$ relator addition\ deletion as well as changing the order of the generators or the relators.  

\begin{itemize}
\item{} $(T1): \langle x_1, \ldots, x_n \mid r_1, \ldots, r_m\rangle$ $\to$
$\langle y, x_1, \ldots, x_n \mid yw^{-1}, r_1, \ldots, r_m\rangle$, where $w$ is a word in $x_1^{\pm 1}, \ldots, x_n^{\pm 1}$.

\item{} $(T1)^{-1}$: reverse of $(T1)$, replacing $y$ by $w$ where it appears in $r_1, \ldots, r_m$.

\item{} $(T2): \langle x_1, \ldots, x_n \mid r_1, \ldots, r_m\rangle$ $\to$
$\langle x_1, \ldots, x_n \mid r, r_1, \ldots, r_m\rangle$, where $r$ is a redundant relation (that is, $r \in R$). 

\item{} $(T2)^{-1}$: reverse of $(T2)$.

\end{itemize}

A Dehn presentation ignores the link orientation. Its generators are regions of a diagram, components of the complement of the universe, with one region arbitrarily designated as the \textit{base region} and set equal to the identity. Relations again correspond to crossings, as in Figure \ref{wirt}(ii). The reader can check that the two presentations resulting from 2(ii a) and 2(ii b) describe isomorphic groups via an isomorphism that maps generators to their inverses. Neither depends on of the choice of base region $R_0$ (see Remark \ref{DWremark} belowre). We use the second presentation throughout.

For the sake of simplicity, we will not distinguish between arcs of $\D$ and Wirtinger generators, using the same symbols for both. Similarly, regions of $\D$ will be identified with Dehn generators.

\begin{figure}
\begin{center}
\includegraphics[height=1.5 in]{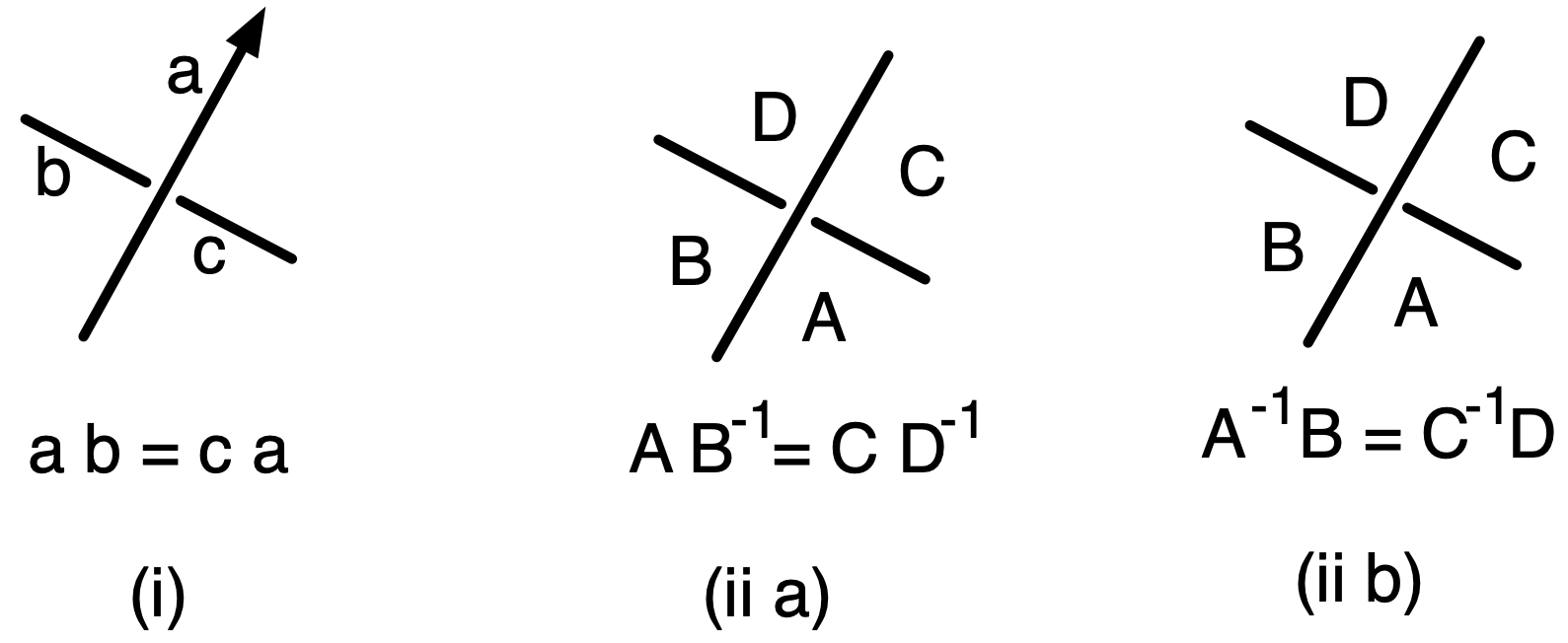}
\caption{(i) Wirtinger relation; (ii) two conventions for Dehn relations}
\label{wirt}
\end{center}
\end{figure}

The group $\pi_{wirt}$ described by the Wirtinger presentation is usually seen to be isomorphic to the link group by a topological argument (see \cite{St80}, for example). Then one proves that the group $\pi_{dehn}$ described by the Dehn presentation is isomorphic to $\pi_{wirt}$ by another topological argument (see \cite{LS77}). In the next section we present a short, purely combinatorial proof that $\pi_{wirt}$ and $\pi_{dehn}$ are isomorphic. The method involves combinatorial ``differentiation" and ``integration" on link diagrams, introduced in \cite{STW18}. Using it we will extend our study to links in thickened surfaces. 

Instead of viewing a link diagram in the plane, we can put it in the 2-sphere $\S^2$. In this egalitarian approach all regions are compact. Such a diagram represents a link in the thickened sphere $\S^2 \times [0,1]$, $\S^3$, or again in $\R^3$.  Regardless of which we choose, two links remain isotopic if and only if their diagrams are transformable into each other by finitely many Reidemeister moves. 

It is natural to replace the 2-sphere by an arbitrary closed, connected orientable surface $S$. A diagram in $S$ represents a link in the thickened surface $S \times [0,1]$. As before, we regard links up to ambient isotopy. Again, two links are isotopic if and only if any diagram of one can be transformed to any diagram of the other by a finite sequence of Reidemeister moves. As explained in \cite{Bo08}, this follows from \cite{HZ64}, which ensures that isotopic links are in fact isotopic by linear moves in arbitrarily small neighborhoods. 

Given a diagram in $S$ for a link $\ell$, the groups $\pi_{wirt}, \pi_{dehn}$ described by the Wirtinger and Dehn presentations are seen to be invariants using Reidemeister moves, but they no longer need be isomorphic. We will describe their precise relationship using combinatorial  integration and differentiation on the diagram.  (For a discussion of the fundamental group $\pi_1(\S \times [0,1]\setminus \ell)$ of the link complement see \cite{CSW14}.)

\section{Integration on Link Diagrams} \label{IntDiff}

There is a natural homomorphism $f_{wd}: \pi_{wirt}\to \pi_{dehn}$, defined  first on generators of $\pi_{wirt}$ and then extended to arbitrary words in the usual way. For any generator $a$ we define $f_{wd}(a)$ to be $A^{-1}B$, where $A$ is the region to the right of the oriented arc $a$, and $B$ is on the other side (Figure \ref{WD}(i)). This is well defined, since if the arc $a$ 
separates another pair of regions $C, D$, as in Figure \ref{WD} (ii), then $A^{-1}B = C^{-1}D$ in $\pi_{dehn}$.
(We think of $A^{-1}B$ as a \textit{derivative} across the arc of our Dehn generator-labeled diagram.) 

We extend $f_{wd}$ in the usual way to a function on words in Wirtinger generators and their inverses. 
In order to show that this induces a homomorphism on $\pi_{wirt}$, we must show that it sends Wirtinger relations to the identity element of $\pi_{dehn}$. For this consider a Wirtinger relation as in Figure \ref{WD}(ii). It is mapped by $f_{wd}$ to $f_{wd}(ab) = f_{wd}(ca)$, which can be written $(C^{-1}D)(D^{-1}B) = (C^{-1}A)(A^{-1}B)$. This simplifies to $C^{-1}B= C^{-1}B$. The case of a left-hand crossing is similar. 

\begin{figure}[H]
\begin{center}
\includegraphics[height=1.5 in]{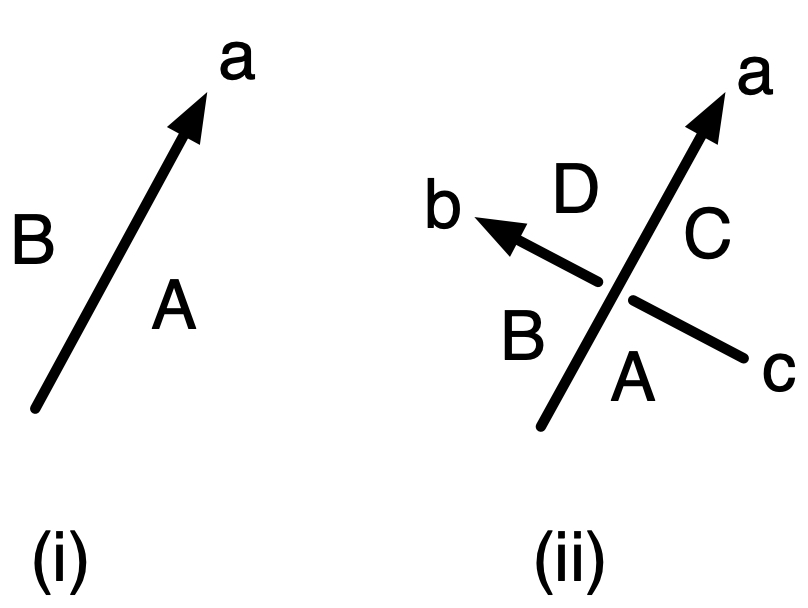}
\caption{(i) $f_{wd}(a)= A^{-1}B$; (ii) $f_{wd}(ab) = f_{wd}(ca)$}
\label{WD}
\end{center}
\end{figure}

In fact $f_{wd}$ is an isomorphism. Our construction of the inverse homomorphism $f_{dw}: \pi_{dehn} \to \pi_{wirt}$ uses ``integration," which we describe next.

Beginning in a region $R$, we travel along a path $\g$ to another region $R'$. As we do this, we build an element of $\pi_{wirt}$ by ``integration," successively appending the generators of $\pi_{wirt}$ (or their inverses) to the right, corresponding to the arcs of the diagram that we cross, as in Figure \ref{integrate}(i). We will denote the final element by $\int_\g \D$, and call it the result of \textit{integration} along $\g$. 
\begin{figure}[H]
\begin{center}
\includegraphics[height=2 in]{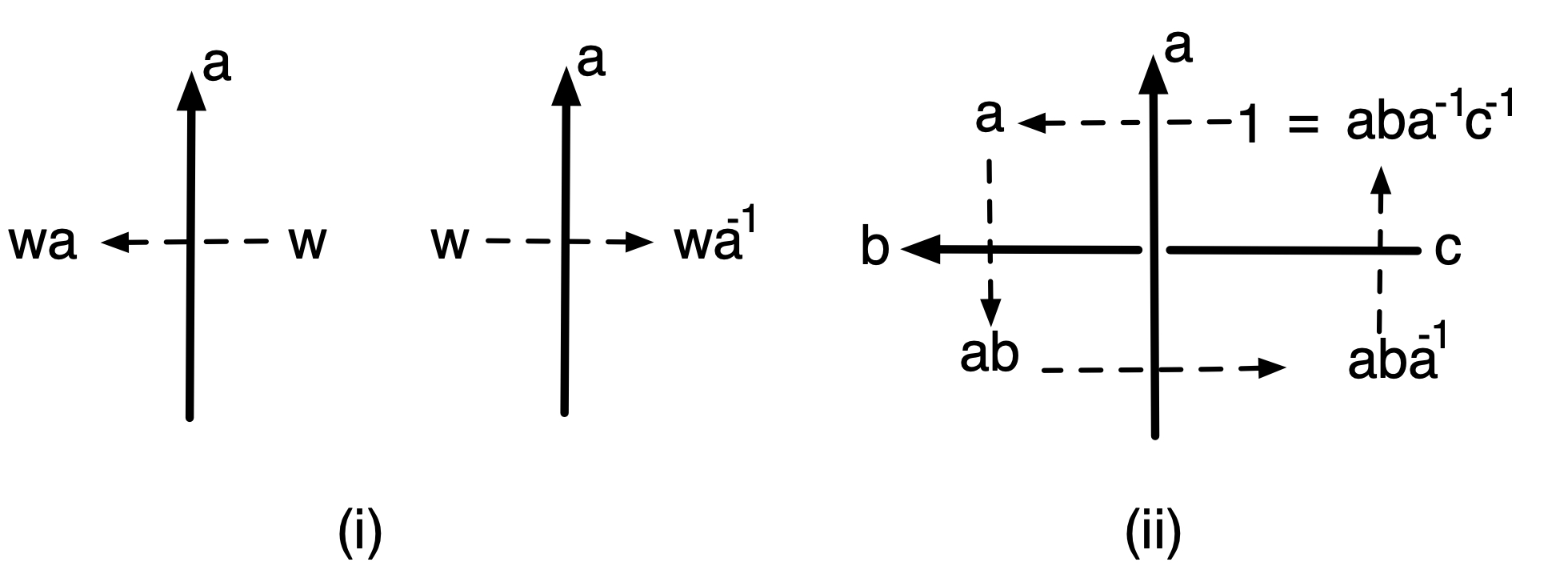}
\caption{(i) Integrating along a path; (ii) showing that $\int_\g \D = 1$ on a small loop.}
\label{integrate}
\end{center}
\end{figure}

We define $f_{dw}(R)$ to be  $\int_\g \D$, where $\g$ is any path from the base region $R_0$ to $R$. 
But is $f_{dw}$ well defined? Proving that is equivalent to proving that our integration is path independent. Given two paths with same initial  point in $R$ and final point in $R'$, a loop is formed from their concatenation, following one path and then going along the other in the opposite direction. The claim that integration is path independent is equivalent to the claim that the path integral around the loop is trivial. Figure \ref{integrate}(ii)) shows this for small loops about a crossing. Since the 2-sphere is simply connected, the verification for small loops about crossings implies the general claim. We leave the details to the energetic reader. 

We have shown that $f_{dw}$ is well defined on generators. To see that it induces a homomorphism on $\pi_{dehn}$, we must verify that it is trivial on a general Dehn relation $A^{-1}B = C^{-1}D$, as in Figure \ref{WD}(ii). If $A$ is sent to an element $w$, then $B$ maps to $wa$. Moreover, $C$ is sent to $wc^{-1}$ and $D$ is mapped to $wc^{-1}a$. Now $A^{-1}B$ and $C^{-1}D$ both map to the same value, $a$. The case of a left-hand crossing is similar. 

Finally, we check that $f_{wd}$ and $f_{dw}$ are inverses of each other. The verification is brief and we will leave it to the reader. 

We have proven the well-known result:

\begin{prop} If $\ell$ is an oriented link in $\R^3$, then $\pi_{wirt} \cong \pi_{dehn}$. \end{prop}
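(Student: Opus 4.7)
My plan is to exhibit mutually inverse homomorphisms $f_{wd}\colon \pi_{wirt}\to\pi_{dehn}$ and $f_{dw}\colon \pi_{dehn}\to\pi_{wirt}$ constructed directly from the link diagram via the differentiation/integration formalism already set up. On a Wirtinger generator $a$ I would send $a$ to the derivative $A^{-1}B$, with $A$ to the right and $B$ to the left of the oriented arc, and extend multiplicatively to words in the $x_i^{\pm 1}$. Two things must be verified before this descends to $\pi_{wirt}$: first, that the value $A^{-1}B$ does not depend on the chosen pair of regions along the arc, which follows by telescoping the Dehn relation $A^{-1}B = C^{-1}D$ across each intermediate crossing on $a$; and second, that every Wirtinger relation is sent to the identity, which reduces to the one-crossing check $(C^{-1}D)(D^{-1}B)=C^{-1}B=(C^{-1}A)(A^{-1}B)$ as in Figure~\ref{WD}(ii), together with the analogous computation at a left-handed crossing.

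For the reverse map I would fix the base region $R_0$, declare $f_{dw}(R_0)=1$, and assign to every other region $R$ the element $\int_\gamma \D \in \pi_{wirt}$ obtained by reading off with signs the Wirtinger generators crossed along an arbitrary path $\gamma$ from $R_0$ to $R$. The central issue is path-independence, which by concatenating one path with the reverse of another reduces to showing that the integral vanishes around every closed loop. Because the ambient surface is simply connected (whether we work in the plane or on $\S^2$), an arbitrary loop can be contracted, in general position, to a concatenation of small loops around individual crossings, and triviality on such a small loop is the single-crossing case check in Figure~\ref{integrate}(ii). I expect this simply-connectedness step to be the main obstacle of the argument, since it is the one place where the planar (or spherical) topology genuinely enters and where a rigorous general-position/homotopy argument is needed; it is also exactly the step that will fail for thickened higher-genus surfaces treated later in the paper.

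Once $f_{dw}$ is known on generators, that it respects the Dehn relation $A^{-1}B=C^{-1}D$ at a crossing is a direct computation using short paths through the crossing: if $f_{dw}(A)=w$, then $f_{dw}(B)=wa$, $f_{dw}(C)=wc^{-1}$ and $f_{dw}(D)=wc^{-1}a$, so both sides map to the common value $a\in\pi_{wirt}$, with the opposite-handed crossing analogous. Finally, that $f_{wd}$ and $f_{dw}$ are mutual inverses is a short telescoping check on generators: for a Wirtinger generator $a$ separating regions $A,B$ with $f_{dw}(A)=w$ and $f_{dw}(B)=wa$, one has $f_{dw}(f_{wd}(a))=w^{-1}(wa)=a$; conversely, if $f_{dw}(R)$ arises from integrating along a path that passes successively through regions $R_0,R_1,\ldots,R_k=R$, then $f_{wd}$ applied to the resulting word telescopes as $(R_0^{-1}R_1)(R_1^{-1}R_2)\cdots(R_{k-1}^{-1}R_k)=R$ in $\pi_{dehn}$, yielding the desired isomorphism.
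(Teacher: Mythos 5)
Your proposal is correct and follows essentially the same route as the paper: the derivative map $f_{wd}(a)=A^{-1}B$, the integration map $f_{dw}$ with path-independence reduced via simple connectivity of $\S^2$ to the small-loop check at a crossing, and the verification that the two maps respect the relations and are mutually inverse. You even supply the telescoping details for the inverse check that the paper leaves to the reader, so nothing is missing.
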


\begin{remark} The terms ``derivative" and ``integral" are used suggestively. But what do they suggest? We propose to think about a link diagram with arcs labeled by corresponding Wirtinger generators as a conservative vector field. Path integration produces labels of the regions by elements of $\pi_{wirt}$ that we associate with Dehn generators via $f_{dw}$. Thus the Dehn generator labeling might be viewed as a potential function, with the integral $f_{dw}(R)= \int_\g\D$ being the work done by the field as we move from the base region $R_0$ to $R$ along the path $\g$. Differentiating returns the original arc labeling. 
\end{remark}

\section{Links in Thickened Surfaces} Moving to the world of link diagrams on surfaces we find that much remains unchanged. Given an oriented link diagram $\D$ in a closed, connected orientable surface $S$ of genus $g >0$, we can again form the Wirtinger presentation of a group $\pi_{wirt}$ and also the Dehn presentation of a second group $\pi_{dehn}$, and the demonstration of invariance under Reidemeister moves is unchanged. The groups need no longer be isomorphic, and so we will call $\pi_{wirt}$ the \textit{Wirtinger 
link group} and $\pi_{dehn}$ the \textit{Dehn link group}. 

In order to describe the relationship between $\pi_{wirt}$ and $\pi_{dehn}$, we will again make use of integration. We will need a couple of facts about it, as surfaces of positive genus are more complicated than spheres. While the first is quickly proved using basic algebraic topology, a geometric argument is possible. The second statement is immediate from the definition.

\begin{lemma} \label{integrate} Assume that $\D$ is an oriented link diagram on a closed, connected orientable surface $S$, and $\g_1, \g_2$ are oriented paths in $S$.  \medskip

(i)  If $\g_1$ and $\g_2$  have the same endpoints and are homotopic rel boundary (that is, homotopic keeping endpoints fixed), then $\int_{\g_1}\D = \int_{\g_2}\D$.  \medskip

(ii) If the terminal point of $\g_1$ is the initial point of $\g_2$ and $\g_1\g_2$ is the concatenated path, then $\int_{\g_1\g_2}\D = \int_{\g_1}\D \int_{\g_2}\D$.

\end{lemma}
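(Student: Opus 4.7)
The plan. Part (ii) is immediate from the definition: integrating along the concatenation $\gamma_1\gamma_2$ simply records the signed generators met along $\gamma_1$, followed by those met along $\gamma_2$, producing the product word $\int_{\gamma_1}\D \cdot \int_{\gamma_2}\D$ in $\pi_{wirt}$. The substance of the lemma lies in (i).

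For part (i) my approach is to reduce the global homotopy statement to the small-loop computation already carried out in the sphere case (the right-hand panel of the previous figure). First I would perturb both paths into general position with respect to $\D$: smooth, transverse to every arc, disjoint from every crossing. Such a perturbation leaves the sequence of signed arc crossings, and hence the integral, unchanged. The perturbed paths are still homotopic rel endpoints, so they can be joined by a generic smooth homotopy $H : [0,1]^2 \to S$ whose time slices $H(\cdot,t)$ are in general position at all but finitely many exceptional times. At each exceptional time exactly one of three codimension-one events occurs:
\begin{itemize}
\item[(a)] the path is deformed inside a single region, leaving its crossing sequence unchanged;
\item[(b)] a small segment of the path crosses one arc of $\D$ twice with opposite signs, inserting or deleting an adjacent pair $aa^{-1}$ in the integrated word;
\item[(c)] a segment of the path slides across a single crossing of $\D$.
\end{itemize}
Events of types (a) and (b) obviously preserve the integral in $\pi_{wirt}$. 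For (c), the words before and after the event differ exactly by traversing a small loop once around the crossing; the sphere-case argument shows that this loop integrates to a Wirtinger relator, hence to the identity. Thus every event preserves the integral, and $\int_{\gamma_1}\D = \int_{\gamma_2}\D$ as required.

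The hard part will be the transversality claim itself: that a generic homotopy between general-position paths decomposes into finitely many elementary events of exactly the three types listed. This is standard differential topology, essentially a one-parameter analog of Reidemeister's theorem for curves on surfaces, but a careful written proof would warrant either a genericity sketch for the smooth map $H$ or a pointer to a standard reference. The algebraic-topology alternative hinted at by the authors amounts to the following: integration is well defined on cellular paths in the dual CW complex of $\D$ in $S$, whose 2-cells (one per crossing, together with the regions, which contribute trivially) impose exactly the Wirtinger small-loop relations already verified in the sphere case. Hence integration descends to a groupoid homomorphism from the fundamental groupoid of $S$ to $\pi_{wirt}$, which is the content of (i).
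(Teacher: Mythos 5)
Your proposal is correct, and it supplies detail that the paper itself omits: the authors state only that part (i) ``is quickly proved using basic algebraic topology, a geometric argument is possible'' and that part (ii) ``is immediate from the definition,'' giving no written proof. Your handling of (ii) matches the paper exactly, and your geometric argument for (i) --- general position, a generic homotopy decomposed into the three codimension-one events, with the crossing-slide event reduced to the small-loop computation of Figure \ref{integrate}(ii) from the sphere case --- is precisely the kind of elementary argument the authors allude to; your closing remark about the dual CW structure is likewise the ``basic algebraic topology'' route they have in mind. The only point to flag is the one you already flag yourself: the claim that a generic homotopy decomposes into finitely many events of exactly types (a)--(c) needs either a genericity argument or a citation, so in a written version you should supply that reference rather than leave it as an assertion.
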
 

We can define a homomorphism $f_{wd}: \pi_{wirt} \to \pi_{dehn}$  just as we did in the previous section, mapping any generator $a$ to $A^{-1}B$, where $A$ is the region to the right of the arc representing $a$ and $B$ is the region to the left. However, $f_{wd}$ is generally no longer injective. To see why we need to look closely at the surface $S$. 

We will visualize the surface $S$ of genus $g$ as a $2g$-gon with directed sides $x_1, y_1, \ldots, x_g, y_g$ identified in the usual way. Think of the bouquet of loops in $S$ as a coordinate system. Of course, there are other bouquets along which we could cut $S$ to get a $2g$-gon. We will say more about that in the last section. 

Without loss of generality we assume that the diagram $\D$ meets the curves $x_i, y_i$ in \textit{general position}, which means that $\D$ intersects them transversely and avoids the common point. Then each $x_i, y_i$ determines a word $r_i = \int_{x_i}\D,  s_i= \int_{y_i}\D$, respectively, by  integration, as illustrated in Figure \ref{peripheral}. The elements of $\pi_{wirt}$ that they determine are unchanged by Reidemeister moves. 

\begin{definition} The \textit{surface subgroup} of $\pi_{wirt}$, denoted by $\pi_{wirt}^S$, is the normal subgroup generated by $r_1, s_1, \ldots, r_g, s_g$. \end{definition}

\begin{figure}[H]
\begin{center}
\includegraphics[height=1.3 in]{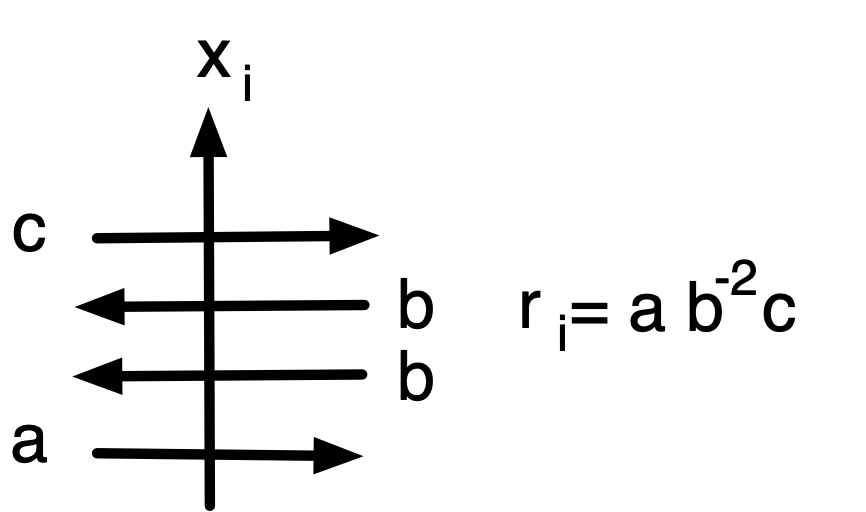}
\caption{Word $r_i$ determined by curve $x_i$}
\label{peripheral}
\end{center}
\end{figure}

\begin{lemma}\label{kernel} $\pi_{wirt}^S$ is in the kernel of $f_{wd}: \pi_{wirt} \to \pi_{dehn}$.
\end{lemma}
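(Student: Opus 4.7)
The plan is to reduce the statement to showing that each of the chosen generators $r_i = \int_{x_i}\D$ and $s_i = \int_{y_i}\D$ of $\pi_{wirt}^S$ lies in $\ker f_{wd}$. Since the kernel of a homomorphism is automatically a normal subgroup, this will force the full normal closure $\pi_{wirt}^S$ to lie in $\ker f_{wd}$ as well.

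The key geometric observation is that, although the side $x_i$ of the $2g$-gon has distinct initial and terminal vertices in the polygon, after the standard edge identifications it becomes a closed loop in $S$ based at the common image vertex. Traversing $x_i$ starting in a region $R_0$ of $\D$, we cross a sequence of arcs $a_1,\ldots,a_n$ and pass in turn through regions $R_0,R_1,\ldots,R_n$. Because $x_i$ is a closed loop meeting $\D$ in general position, the final region $R_n$ is literally the same region as $R_0$.

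Next I will telescope. The integration records a word $\int_{x_i}\D = a_1^{\e_1}\cdots a_n^{\e_n}$ with $\e_k \in \{\pm 1\}$ determined by the relative orientations of $x_i$ and $a_k$. A short case check using the sign conventions of Figures \ref{WD}(i) and \ref{integrate}(i) shows that, regardless of $\e_k$, one has $f_{wd}(a_k^{\e_k}) = R_{k-1}^{-1}R_k$ in $\pi_{dehn}$. Therefore
\[ f_{wd}(r_i) \;=\; (R_0^{-1}R_1)(R_1^{-1}R_2)\cdots(R_{n-1}^{-1}R_n) \;=\; R_0^{-1}R_n \;=\; R_0^{-1}R_0 \;=\; 1 \]
in $\pi_{dehn}$, and the same computation with $y_i$ in place of $x_i$ yields $f_{wd}(s_i)=1$.

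The only real obstacle is the bookkeeping of sign conventions needed to confirm that $f_{wd}(a_k^{\e_k}) = R_{k-1}^{-1}R_k$ in every subcase; once this is established, the rest is a pure telescoping cancellation already valid in the free group on the Dehn generators. Notably this argument needs only that $x_i$ and $y_i$ are closed loops in $S$ and does not invoke either part of Lemma \ref{integrate}.
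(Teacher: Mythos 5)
Your proposal is correct and follows essentially the same route as the paper: reduce to the generators $r_i, s_i$ (the normal closure is automatically handled since kernels are normal), check that $f_{wd}(a_k^{\e_k}) = R_{k-1}^{-1}R_k$ in both sign cases, and telescope using the fact that the loop $x_i$ returns to its starting region. The paper's proof is the same telescoping computation (with what appears to be a typo, $A_{i-1}^{\e_i}A_i$ for $A_{i-1}^{-1}A_i$), so there is nothing to add.
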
 

\begin{proof} A relator $r_i$ has the form  $a_1^{\e_1} a_2^{\e_2} \cdots a_{2m}^{\e_{2m}},$
where $\e_1, \ldots, \e_{2m} \in \{\pm 1\}$.  Let $A_0, \ldots, A_{2m-1}, A_{2m}=A_0$ be the regions of the diagram that we encounter as we follow $x_i$. We must show that the image $f_{wd}(r_i)$ is trivial. This is clear since $f_{wd}(a_i^{\e_i}) = A_{i-1}^{\e_i}A_i$, for all $i$ and the two cases $\e_i =1, \e_i=-1$. Thus $f_{wd}(a_1^{\e_1}\cdots a_{2m}^{\e_{2m}}) = A_0^{-1}A_0 = 1$.

A similar argument applies to the relators $s_i$. 
\end{proof}

We come now to the lesson of our story. If we try to use integration to define $f_{dw}$ as we did in the previous section, then Lemma \ref{kernel} warns us that the result will not be well defined. Remember that path-independence of integration is equivalent to the requirement that the path integral around any closed loop is trivial. Integrating around $x_i, y_i$ generally produce nontrivial elements. However, if we replace $\pi_{wirt}$ by the quotient $\pi_{wirt}/\pi_{wirt}^S$, then path-independence is recovered. At the same time, we arrive at the relationship between $\pi_{dehn}$ and $\pi_{wirt}$.

\begin{theorem} \label{DW} \cite{By12} If $\ell$ is an oriented link in a thickened surface $S \times [0,1]$, then $\pi_{dehn} \cong \pi_{wirt}/\pi_{wirt}^S$. 
\end{theorem}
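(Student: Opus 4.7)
The plan is to build mutually inverse homomorphisms between $\pi_{dehn}$ and $\pi_{wirt}/\pi_{wirt}^S$. In one direction, Lemma \ref{kernel} says that $f_{wd}$ already annihilates $\pi_{wirt}^S$, so it factors to a homomorphism $\bar f_{wd}\colon \pi_{wirt}/\pi_{wirt}^S \to \pi_{dehn}$. The work is then to produce a map in the opposite direction by imitating the integration construction of Section \ref{IntDiff}, but now landing in the quotient rather than in $\pi_{wirt}$ itself.

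Fix the base region $R_0$ and a basepoint $p_0 \in R_0$. For each region $R$, choose a basepoint $p_R \in R$, let $\g$ be any path from $p_0$ to $p_R$ meeting $\D$ transversely and avoiding the crossings, and set $f_{dw}(R) = \bigl[\int_\g \D\bigr]$, the coset in $\pi_{wirt}/\pi_{wirt}^S$. The crux is well-definedness: any two such paths differ (by Lemma \ref{integrate}(ii)) by a loop $\lambda$ in $S$ avoiding the crossings, and we must show that $\bigl[\int_\lambda \D\bigr] = 1$ in the quotient. The fundamental group of $S$ with the crossings deleted is generated, based at any chosen point, by small loops encircling each crossing together with loops freely homotopic to the standard bouquet curves $x_1, y_1, \ldots, x_g, y_g$. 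Integration around a small loop about a crossing is trivial in $\pi_{wirt}$ by the local computation of Figure \ref{integrate}(ii), identical to the sphere case. By Lemma \ref{integrate}(i), integration around a loop homotopic to $x_i$ (resp. $y_i$) yields a conjugate of $r_i$ (resp. $s_i$) by an integration word coming from the change of basepoint. Since $\pi_{wirt}^S$ was defined as the \emph{normal} subgroup generated by the $r_i, s_i$, every such contribution dies in the quotient, so $\bigl[\int_\lambda \D\bigr] = 1$ as required.

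With $f_{dw}$ well defined on generators, verifying that it respects each Dehn relation at a crossing is the same local calculation already performed in Section \ref{IntDiff}: the integrations along paths to the four regions around a crossing differ precisely by the Wirtinger generators corresponding to the two arcs. The mutual-inverse check of $\bar f_{wd}$ and $f_{dw}$ is likewise local and identical to the sphere case. The principal obstacle is thus the well-definedness of $f_{dw}$; it is not deep once phrased correctly, but some care is needed to account for basepoint changes and to observe that the resulting conjugates of the $r_i, s_i$ lie in $\pi_{wirt}^S$ because it is the normal closure, which retrospectively justifies that choice in the definition of the surface subgroup.
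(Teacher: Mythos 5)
Your proposal is correct and follows essentially the same route as the paper: factor $f_{wd}$ through the quotient via Lemma \ref{kernel}, then define the inverse by integration valued in $\pi_{wirt}/\pi_{wirt}^S$, with well-definedness reduced to decomposing an arbitrary loop into the standard generators and small loops about crossings. Your explicit remark that basepoint changes produce conjugates of the $r_i, s_i$, absorbed precisely because $\pi_{wirt}^S$ is the normal closure, is a point the paper leaves implicit but is the same argument.
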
 

\begin{proof} We begin by showing that integration is path-independent provided we take values in the quotient group $\pi_{wirt}/\pi_{wirt}^S$.  For convenience we will assume that the base region $R_0$ contains the common point $*$ of the loops $x_i, y_i$. Consider a loop $\g$ beginning and ending at $*$. 

We can write $\g$ up to homotopy fixing $*$ as a product $\g_1^{\e_1}\cdots \g_n^{e_n}$, where each
$\g_j \in \{x_1, y_1, \ldots, x_g, y_g\}$ and each $\e_j  \in \{\pm 1\}$. (This follows from the fact of algebraic topology that the fundamental group of $S$ is generated by the loops $x_i, y_i$. However, one can see this
directly by puncturing the $2g$-gon that forms the surface at some point not in $\g$, and then retracting the punctured $2g$-gon to its boundary.) Since each $\int_{x_i}\D, \int_{y_i}\D$ is in $\pi_{wirt}^S$, so is 
$\int_\g\D$ by Lemma \ref{kernel}.

By Lemma \ref{kernel}, the homomorphism $f_{wd}: \pi_{wirt} \to \pi_{dehn}$ induces a homomorphism 
$\bar f_{wd}: \pi_{wirt}/\pi_{wirt}^S \to \pi_{dehn}$. 
Define $\bar f_{dw}: \pi_{dehn} \to \pi_{wirt}/\pi_{wirt}^S$ by path integration, as we did in the previous section, but taking values in the quotient group  $\pi_{wirt}/\pi_{wirt}^S$. It is a simple matter to verify that 
the composition of $\bar f_{wd}$ and $\bar f_{dw}$ in either direction is the identity homomorphism.

 \end{proof}

\begin{remark}\label{DWremark} (i) Theorem \ref{DW} is the main result of \cite{By12}. 

(ii) Theorem \ref{DW} implies that $\pi_{dehn}$ does not depend upon the choice of base region $R_0$. 

(iii) If we do not choose a base region $R_0$, then the Dehn presentation that we get describes the free product $\hat\pi_{dehn} = \pi_{dehn} * \Z.$ The standard proof of this classical result for planar diagrams (see, for example, \cite{LS77}) can be adapted in the case of higher genus surfaces. \end{remark}

\begin{example} Our link diagrams $\D$ arise from the projection of $\S \times [0,1]$ onto $S$ from above; that is, overcrossing arcs correspond to larger values of the second coordinate. In this sense, $\pi_{wirt}$ is the \textit{upper Wirtinger link group}. If instead we project from below, then another diagram of $\ell$ is obtained, and the resulting Wirtinger group, the \textit{lower Wirtinger link group}, can be different when $S$ has positive genus. Consider the knot in Figure \ref{topbottom} viewed from the two perspectives. The reader can verify that the upper Wirtinger group has presentation 
$\langle a, b \mid aba=bab \rangle,$ while the lower Wirtinger group is infinite cyclic.

\begin{figure}[H]
\begin{center}
\includegraphics[height=2 in]{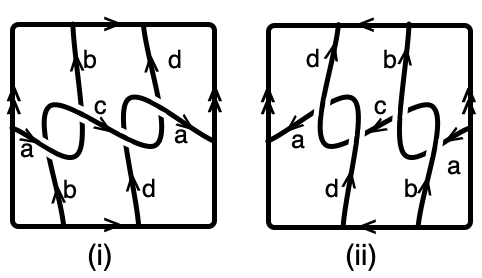}
\caption{Distinct upper and lower Wirtinger knot groups (cf. \cite{GPV00}, Fig. 7)}
\label{topbottom}
\end{center}
\end{figure}

A reason for the difference can be found in algebraic topology. Recall that for a link in $\R^3$ or $\S^3$, its Wirtinger link group is the fundamental group of the link complement. Choosing the basepoint of the fundamental group above a diagram results in an upper group presentation while placing it below results in the lower group presentation. Since each group is the fundamental group of the link complement, the upper and lower Wirtinger presentations describe the same group. 

For a link $\ell$ in $S \times [0,1]$, where $S$ is a  surface of arbitrary genus, the upper Wirtinger group can be seen to be $\pi_1((S \times [0,1] \setminus \ell) / S \times \{1\})$,  the fundamental group of $S\times [0,1] \setminus \ell$ with $S \times \{1\}$ coned to a point that serves as fundamental group basepoint.
Similarly, the lower group is  $\pi_1((S \times [0,1] \setminus \ell) / S \times \{0\})$. Less obvious is that the Dehn link group is the fundamental group of $S \times [0,1] \setminus \ell$ with both $S \times \{0\}$ and $S \times \{1\}$ coned to separate points, and hence the ``upper" and ``lower" Dehn link groups are the same (trivial in the above example). These facts were previously observed by N. Kamada and S. Kamada \cite{Ka20}. They will not be used here and are mentioned only the sake of motivation.

\end{example}

\section{Fox's Free Differential Calculus}\label{color}

In a series of papers beginning in 1953, R.H. Fox introduced the ``free differential calculus,"  a method for constructing  invariants for groups from presentations \cite{Fo54}. Although inspired by homology calculations in covering spaces, it is a completely combinatorial method.

Let $H$ be a group. We will make use of the group ring $\Z[H]$.
It consists of all finite linear combinations $\sum n_h h$, where each $n_h\in \Z$ and $h \in H$. 
Addition is defined coordinate-wise by: 
$$\sum m_h h + \sum n_h h = \sum (m_h + n_h) h,$$
while multiplication is given by:
$$(\sum m_h h)(\sum n_{h} h) = \sum(\sum_{h=kk'} m_k n_{k'}) h.$$
Note that $H$ is embedded in $\Z[H]$ in a natural way. 
We can think of elements of $\Z[H]$ as a linearization of $H$. 

The partial derivative $\partial/\partial x_i$ is a  homomorphism from $\Z[F]$ to itself, defined by:

$${\partial x_i \over \partial x_j} = \d_{ij}, \quad  {\partial x_i^{-1} \over \partial x_j} = -\d_{ij} x_i^{-1}$$
$${\partial (uv )\over \partial x_j} ={ \partial u \over \partial x_j }+ u {\partial v \over \partial x_j}.$$
The last equation is particularly useful when $v$ is the last symbol $x_i^{\pm 1}$ of a word.

%

Given a presentation $\<x_1, \ldots, x_n \mid R_1, \ldots, R_m \>$ of a group $\pi$, its \textit{Jacobian matrix} $J$ is the $m \times n$ matrix with $ij$th entry $\partial R_i/\partial x_j$. (If a relation $R=S$ appears in a presentation instead of a relator, then we can 
take a partial derivative of each side and subtract the results or we can form the relator $RS^{-1}$ and take its partial derivative. The outcomes will be the same.)

How can we build invariants of a presented group $\pi$ using the free differential calculus? Begin by choosing a homomorphism $\phi$ from $\pi$ to an abelian group $H$. It extends to a homomorphism 
$\phi: \Z[\pi] \to \Z[H]$. Applying $\phi$ to each coefficient of the Jacobian matrix $J$, we get the \textit{specialized Jacobian matrix} $J^\phi$. The quotient $\Z[H]^n/J^{\phi}\Z[H]^n$, the \textit{cokernel} of $J^\phi$, describes a module over the the group ring $\Z[H]$. It has generators $x_1, \ldots, x_n$, and relations corresponding to the rows of the matrix $J^\phi$. The $i$th row is the relation $\sum_j (\partial r_i/\partial x_j) x_j= 0$.  By \cite{Fo54},  the module is independent of the presentation of the group $\pi$. The strategy of the proof is to show that Tietze transformations change the Jacobian matrix only up to elementary transformations.

Let $\pi= \pi_{wirt}$ be the Wirtinger group of a link  in a thickened surface $S$, and consider the homomorphism $\phi$ from $\pi$ to the infinite cyclic group $\langle t \rangle$, mapping each generator to $t$. The entries of $J^\phi$ are integral polynomials in the variables $t, t^{-1}$. When $S$ is the 2-sphere, the module presented is well known to knot theorists: it is the first homology group of the infinite cyclic cover of the link complement. 

\begin{example} \label{trefoilex} Consider the Wirtinger presentation $\pi= \langle a, b, c \mid ab=ca, bc=ab, ca=bc\rangle$ of the trefoil knot in $\S^3$ (Figure \ref{trefoil}). (Here it is convenient to denote generators by $a, b, c$, and avoid confusion with previously defined loops in $S$.) The reader can verify that
$$J = \begin{pmatrix} 1-c & a & -1\\ -1&1-a &b \\ c& -1 & 1-b \end{pmatrix}.$$

Let $\phi: \pi \to \langle t \rangle$ be the homomorphism that maps each generator to $t$. Then 

$$J^\phi = \begin{pmatrix} 1-t & t & -1\\ -1&1-t &t \\ t& -1 & 1-t \end{pmatrix}.$$

\begin{figure}[H]
\begin{center}
\includegraphics[height=1.3 in]{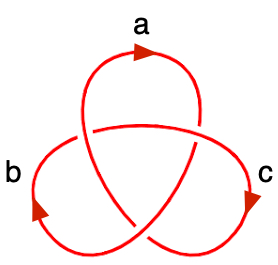}
\caption{Diagram of a trefoil knot with Wirtinger generators}
\label{trefoil}
\end{center}
\end{figure}

\end{example}

\section{The Dehn Coloring Group} \label{DCG}

Let $\nu$ be the homomorphism of $\pi= \pi_{wirt}$ to the multiplicative group $H=\{\pm 1\}$ of order 2,  sending each Wirtinger generator to $-1$.  The specialized Jacobian matrix $J^\nu$ has entries in $\Z[\pm 1]\cong \Z$. The partial derivatives of a Wirtinger relation $ab=ca$ (Figure \ref{WD}) contribute to $J^\nu$ a row corresponding to the relation $a-b=c-a$. Rewritten as $2a=b+c$, it is the well-known Fox coloring condition for arcs of a diagram, as in Figure \ref{fox}\footnote{Let $p$ be a prime. If we regard $a, b$ and $c$ as elements of $\Z/p$ (``colors"),  then the condition says that the  color of any overcrossing arc of the diagram is equal to the sum of the colors of the two arcs below it.}. (The reader should observe that $J^\nu$ can be recovered from $J^\phi$ in Example \ref{trefoilex} by replacing $t$ with $-1$.)
	
\begin{figure}[H]
\begin{center}
\includegraphics[height=1.3 in]{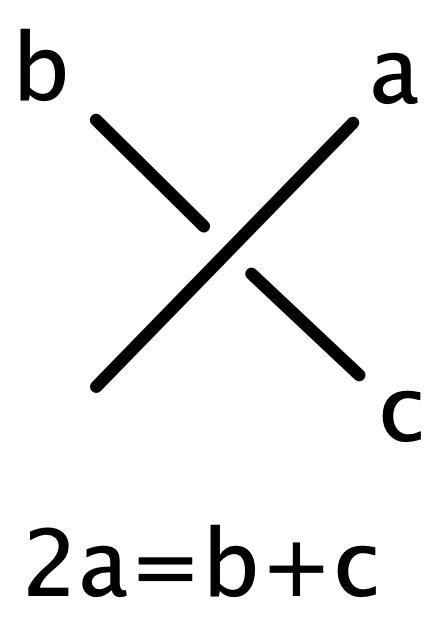}
\caption{Fox coloring rule}
\label{fox}
\end{center}
\end{figure}

Any link diagram in the 2-sphere (or plane) can be \textit{checkerboard shaded}, some of its regions shaded so that whenever two regions share a common arc, exactly one of them is shaded. If a diagram admits a checkerboard shading, then it admits exactly two distinct checkerboard shadings. 

What about diagrams in surfaces of higher genus? A diagram of a link $\ell$ in a surface $S$ can be checkerboard shaded if and only if $\ell$ represents the trivial element of $\Z/2$-homology $H_1(S;\Z/2)$. This condition is equivalent to the requirement that the diagram meets each loop $x_i, y_i$ in an even number of points. Proving this is a nice exercise.

Consider a checkerboard shaded link diagram in a surface. The homomorphism $\bar \nu = \nu \circ  f_{dw}$ maps all unshaded Dehn generators of $\D$ to the same element of $\{\pm 1\}$, and all shaded generators to the other. The result of applying Fox calculus to a Dehn relation such as $A^{-1}B = C^{-1} D$ (Figure \ref{WD}(ii)) is $A+B = C+D$, which we call the \textit{Dehn coloring condition} for $\D$ (see Figure \ref{Dehn coloring}). The calculation depends only on the fact that $A, D$ map to the same value while $B, C$ map to the other. See \cite{CSW14'} for additional details. 

\begin{figure}[H]
\begin{center}
\includegraphics[height=1.7 in]{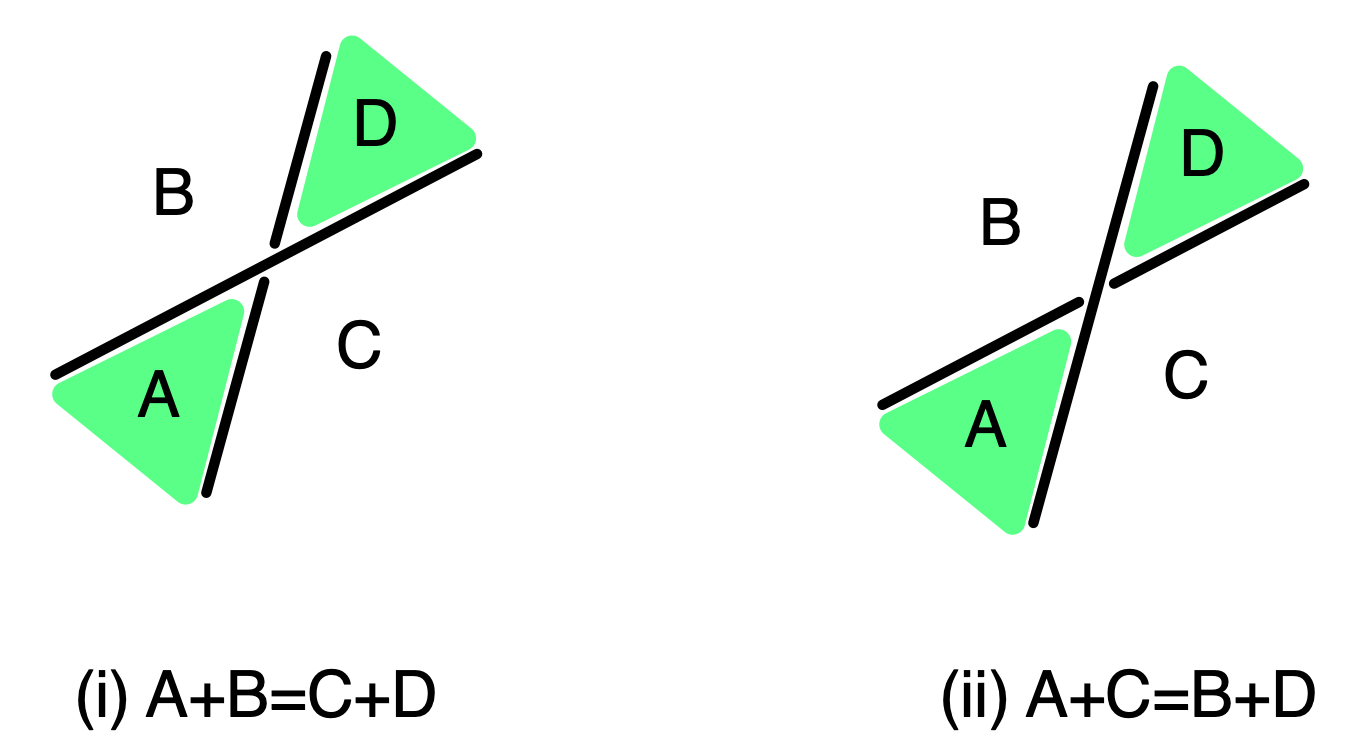}
\caption{Dehn coloring condition}
\label{Dehn coloring}
\end{center}
\end{figure}

\begin{definition} The \textit{Dehn coloring group} $\C$ of a link $\ell$ in a thickened surface $S \times [0,1]$ is the cokernel $\Z^n/J^{\bar \nu}\Z^n$, where $J^{\bar \nu}$ is the Jacobian $n \times n$ matrix of a Dehn presentation for the group of $\ell$ and $\bar \nu = \nu \circ  f_{dw}$. 
\end{definition}

\begin{remark} \label{homology} When $S = \S^2$ it is well known that the Dehn coloring group $\C$ is isomorphic to $H_1(M_2; \Z)\oplus \Z$, where $M_2$ is the 2-fold cyclic cover of $\S^3 \setminus \ell$ corresponding to the homomorphism that maps each meridian to a generator of $\Z/2$. \end{remark}

Given any link diagram $\D$ in a $S$, we can apply Reidemeister moves to assure that all regions are contractible. Then if the diagram is checkerboard shaded, we can construct a graph $G$, embedded in $S$ and unique up to isotopy, with a vertex in each shaded region and an edge through each crossing joining a pair shaded regions. (An edge is allowed to join a vertex to itself.) Such a graph is called a \textit{Tait graph} in honor of the nineteenth-century Scottish pioneer of knot theory (and golf enthusiast) Peter Guthrie Tait.  For each edge $e$ of $G$ we assign a weight $w_e= \pm 1$ according to the type of crossing involved, as in Figure \ref{medial4}. In order to avoid notational clutter, unlabeled edges are assumed to have weight $+1$. 
We will use the Tait graph to determine all of the Dehn coloring relations. Note that unshaded regions of $\D$ become faces of $G$. We will refer to the Dehn generators corresponding to shaded and unshaded regions of $\D$ as \textit{vertex generators} and \textit{face generators}, respectively.

\begin{figure}[H]
\begin{center}
\includegraphics[height=1.5 in]{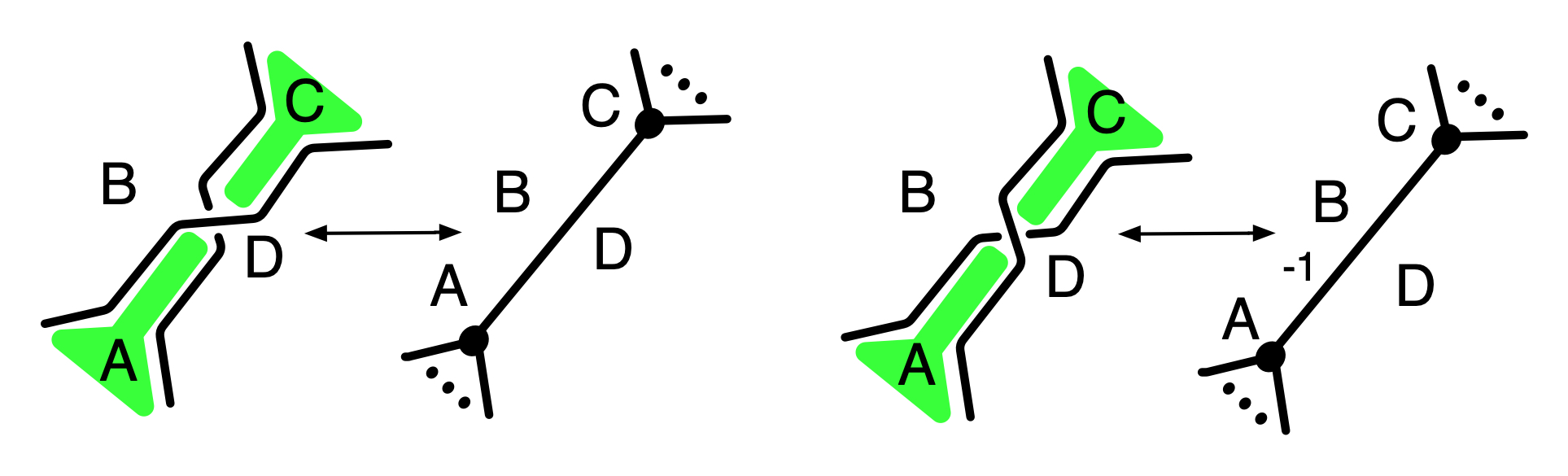}
\caption{Constructing a Tait graph from a checkerboard shaded diagram. Shaded (resp. unshaded) generators of $\D$ become vertex (resp. face) generators of $\D$.}
\label{medial4}
\end{center}
\end{figure}

Recall that there are two checkerboard shadings of $\D$. If we use the other shading, then we get a Tait graph $G^*$ that is dual to $G$. Each edge $e^*$ of $G^*$ meets an edge $e$ of $G$ transversely in a single point. The product $w_e w_{e^*}$ of weights is  $-1$.

The \emph{adjacency matrix} of any edge-weighted graph $G$ with vertices $v_1, \ldots, v_n$ is the $n \times n$ matrix 
$A = (a_{i, j})$ such that $a_{i, j}$ is the sum of the weights of edges between $v_i$ and $v_j$.
An edge joining a vertex $v_i$ to itself is counted twice, so it contributes $\pm 2$ to $a_{i,i}$. Define $\d= (\d_{i, j})$ to be the $n \times n$ diagonal matrix with $\d_{i,i}$ equal to the sum of the weights of edges incident on $v_i$, again counting loops twice.

\begin{definition} \label{lapdef} The \emph{Laplacian matrix} $L_G$ of a finite graph $G$ is $\d - A$. The  \emph{Laplacian group} ${\L}_G$ is the cokernel $\Z^n/ L_G\Z^n$. 
\end{definition}

Using Reidemeister moves it can be shown that the pair $\{\L_G, \L_{G^*}\}$ is an invariant of the link $\ell$. See \cite{SW19'} for details.

The reader might wonder why we have introduced yet another group. The answer is that there is 
a relationship between the Laplace group $\L_G$ and the Dehn coloring group $\C$. We see the relationship by using Dehn relations to eliminate face generators of $\C$.

Around any vertex $v$ of $G$, write the Dehn coloring conditions for all of the adjacent edges, always putting the face generator corresponding to the region to the left of the edge, as viewed from $v$, on the left side of the equation. (This puts $v$ on the right side of the equation if the edge carries a negative weight.) Define $R_v$ to be the sum of the relations. The face generators cancel in pairs,  and so $R_v$ is a relation in the vertex generators. It is not difficult to see that $R_v$ is in fact the relation in $\L_G$ associated to $v$. 

As an example, consider  Figure \ref{holonomy}. Here 
$$v+u_1 = v_1 + u_2$$
$$v+u_2 = v_2 + u_3$$
$$v_3 + u_3 = v + u_1$$
and so $R_v$ is the relation:
$$v+ v_3 = v_1 +v_2,$$
which can be written as the Laplacian group relation: 
$$v = v_1 + v_2 - v_3.$$

\begin{figure}[H]
\begin{center}
\includegraphics[height=2 in]{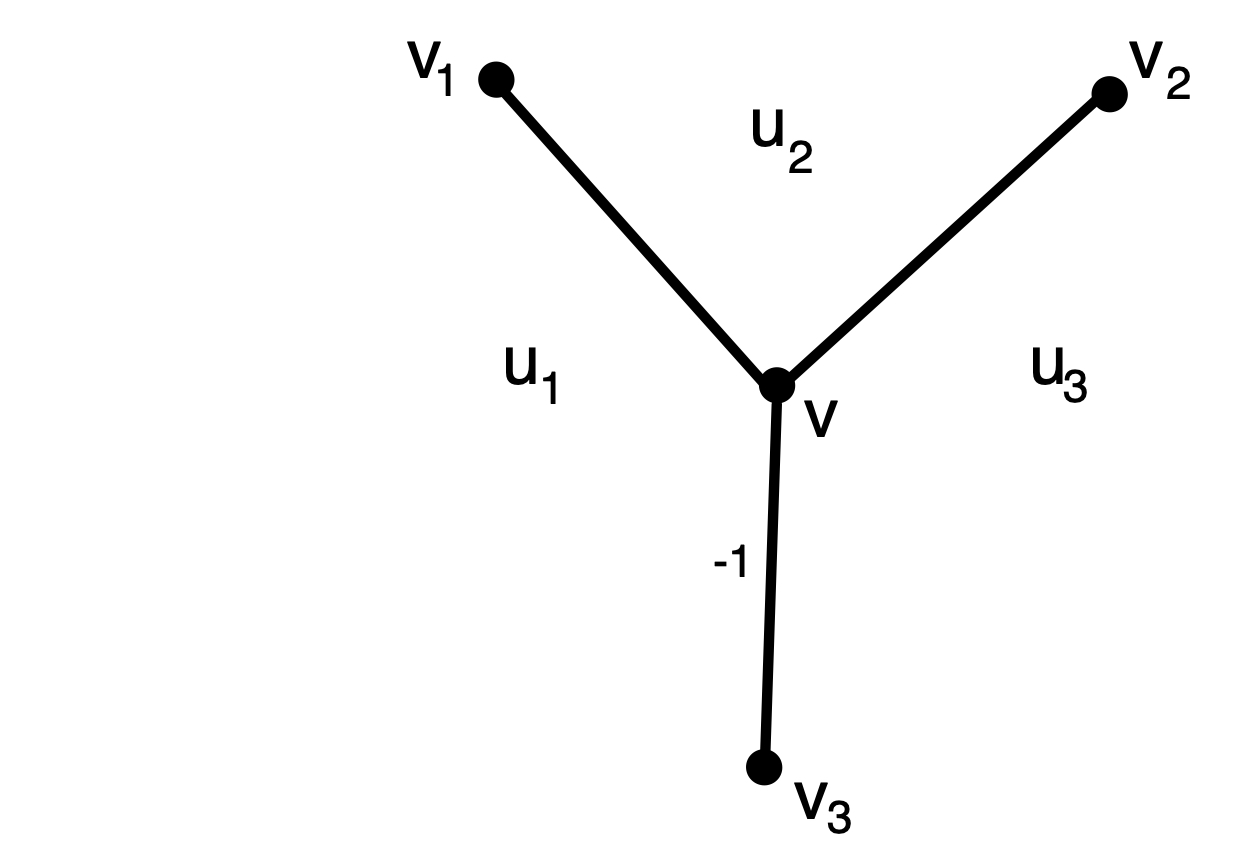}
\caption{Dehn coloring conditions produce the relation $R_v:$ $v=v_1+v_2-v_3$}
\label{holonomy}
\end{center}
\end{figure}

We will define $\C^s$ to be the subgroup of $\C$ generated by the vertex generators. Then $R_v$ is a relation of $\C^s$. As we will see, such relations form a sufficient set for $\L_G$ when $S = \S^2$. For surfaces of positive genus, the additional needed relations are easy to describe. They are the result of rewriting as we go around the loops $x_i, y_i$. 

Every Dehn relation can be written in the form $v_1-v_2= u_1 -u_2$, for some vertex generators $v_1, v_2$ and face generators $u_1, u_2$. Any relation in $\C^s$ is a sum of Dehn relations in which the face generators cancel in pairs. Such relations correspond to circuits in the dual graph $G^*$, and hence to closed paths in $S$. The relations $R_v$, as $v$ varies over all vertices of $G$, generate the relations arising from contractible closed paths. (The proof is similar to the suggested argument in Section \ref{IntDiff} for showing that integration is path independent.) Consequently, since every closed path in the 2-sphere is contractible, $\C^s \cong {\L}_G$ when $S=\S^2$. 

Now let's replace the checkerboard shading of our link diagram with the other checkerboard shading. This reverses the roles of shaded and unshaded regions, and it replaces the Tait graph $G$ with the its dual $G^*$. If we define $\C^u$ to be the subgroup of $\C$ generated by face generators, then we find that $\C^u \cong {\L}_{G^*}$ when $S=\S^2$. 

Knot theorists recognize $L_G$ and $L_{G^*}$ as Goeritz matrices for the link $\ell$ described by the diagram. Both ${\L}_G$ and ${\L}_{G^*}$ are isomorphic to $H_1(M_2; \Z),$ where $M_2$ is the 2-fold cover of $\S^3$ branched over the $\ell$ (see Remark \ref{homology}). In particular, ${\L}_G \cong {\L}_{G^*}$. We will give an alternative proof of this fact, independent of algebraic topology.

\begin{prop}\label{iso} If $\D$ is a link diagram in $\S^2$, then the Laplacian groups ${\L}_G, {\L}_{G^*}$ are isomorphic. 
\end{prop}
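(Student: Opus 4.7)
The plan is to use the Dehn coloring group $\C$ as a bridge between $\L_G$ and $\L_{G^*}$. The paper has already established the isomorphisms $\C^s \cong \L_G$ and $\C^u \cong \L_{G^*}$ in the case $S = \S^2$, so it suffices to show $\C^s \cong \C^u$. My strategy is to prove the splittings $\C \cong \C^s \oplus \Z$ and $\C \cong \C^u \oplus \Z$; together with cancellation of a free $\Z$-summand in finitely generated abelian groups, this will yield $\L_G \cong \L_{G^*}$.

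First I would define a ``parity'' homomorphism $\chi: \C \to \Z$ that sends every vertex (shaded) generator to $0$ and every face (unshaded) generator to $1$. This is well-defined because at each crossing the Dehn coloring relation $A+B=C+D$ has two generators of each color, making both sides of the relation carry the same parity. Since $\chi$ vanishes on $\C^s$ and sends any chosen face generator $u_0$ to $1$, we get $\Z u_0 \cap \C^s = 0$, and symmetrically $\Z v_0 \cap \C^u = 0$ for any chosen vertex generator $v_0$.

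Next I would show $\C = \C^s + \Z u_0$ by path integration in the dual Tait graph $G^*$. Rewriting the Dehn relation at a crossing as $B - C = D - A$ expresses the difference of the two adjacent face generators as a difference of the two adjacent vertex generators. Given any face generator $u$, I choose a path in $G^*$ from $u_0$ to $u$ and telescope these differences along the path, writing $u - u_0$ as a $\Z$-linear combination of vertex generators, so $u \in \C^s + \Z u_0$. Path independence is the essential planarity step: any loop in $G^*$ bounds a disk in $\S^2$, and the loop integral decomposes as the sum of small-loop integrals around the enclosed vertices of $G$, each of which is exactly the relation $R_v$. Since $R_v$ is by construction a $\Z$-linear combination of Dehn relations, it is trivial in $\C$, so loop integrals vanish and the representation is independent of the path. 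By the symmetric argument, rewriting the same Dehn relation as $A - D = C - B$ and integrating in the primal graph $G$, every vertex generator $v$ equals $v_0$ plus a $\Z$-combination of face generators, giving $\C = \C^u + \Z v_0$. Combined with the $\chi$ and its analogue, these become direct-sum decompositions $\C = \C^s \oplus \Z u_0 = \C^u \oplus \Z v_0$.

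Finally, substituting the known identifications $\C^s \cong \L_G$ and $\C^u \cong \L_{G^*}$ gives $\L_G \oplus \Z \cong \C \cong \L_{G^*} \oplus \Z$. Because $\L_G$ and $\L_{G^*}$ are finitely generated abelian groups, the structure theorem lets us cancel the free $\Z$-summand and conclude $\L_G \cong \L_{G^*}$. The main obstacle I anticipate is the careful bookkeeping of signs and orientations when verifying that a loop integral in $G^*$ telescopes into the prescribed combination of $R_v$'s; once the convention for the two crossing types in Figure \ref{medial4} is fixed, the rest of the argument is essentially formal.
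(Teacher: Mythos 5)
Your proposal is correct and takes essentially the same route as the paper: both arguments reduce to the splittings $\C \cong \C^s \oplus \Z$ and $\C \cong \C^u \oplus \Z$ (the paper obtains them by noting that $\C/\C^s$ is infinite cyclic because $G^*$ is connected, so the short exact sequence splits; you obtain them via the explicit parity homomorphism $\chi$ together with telescoping along paths) and then cancel the free $\Z$ summand using the structure theorem for finitely generated abelian groups. One minor remark: the path-independence discussion is not actually needed for your argument, since a single path in $G^*$ already exhibits $u - u_0$ as an element of $\C^s$, and the directness of the sum comes entirely from $\chi$; the real use of $S = \S^2$ is hidden in the previously established isomorphisms $\C^s \cong \L_G$ and $\C^u \cong \L_{G^*}$, which both you and the paper invoke.
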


\begin{proof} Consider the short exact sequence of abelian groups
$$0 \to \C^s \hookrightarrow \C \to \C/\C^s \to 0.$$
The relation $v_1-v_2= u_1 -u_2$ in $\C$ becomes $0 = u_1-u_2$ in the quotient $\C/\C^s$, and hence face generators become equal whenever they share an edge in the graph $G^*$. Since we assume that all regions of $\D$ are contractible, $G^u$ is connected. The quotient is infinite cyclic, and the short exact sequence splits. Hence $\C \cong \C^s \oplus \Z$. The same argument applied to $\C^u$ shows that $\C \cong \C^u \oplus \Z$. It follows from the structure theorem for finitely generated abelian groups that $\D^s$ and $\D^u$ are isomorphic. Since
 $\C^s$ and $\C^u$ are isomorphic to ${\L}_G$ and ${\L}_{G^*}$, respectively, the proof is complete. 
\end{proof} 

\begin{remark} Proposition \ref{iso} can be proved yet a third way, using the fact that the Tait graph $G$ can be converted to its dual $G^*$ by a sequence of Reidemeister moves \cite{YK57}. (See also \cite{SW19'}.)
\end{remark} 

\section{The Dehn Coloring Module}  The Dehn coloring group $\C$ and the Laplacian group $\L_G$ associated to a checkerboard shaded diagram $\D$ of a link can be made stronger invariants, as done in \cite{SW19'}, using homological information from $S$. Then $\C$ and $\L_G$ become modules over the group ring of $H_1(S; \Z)$. 

We think of $H_1(S; \Z)\cong \Z^{2g}$ as the multiplicative abelian group freely generated by $x_1, y_1, \ldots, x_g, y_g$. Recall that these generators are represented by a bouquet of simple oriented loops in $S$ (denoted by the same symbols). The universal abelian cover $\tilde S$ of $S$ has deck transformation group $A(\tilde S)$ that is isomorphic to $H_1(S; \Z)$. The Dehn coloring module and Laplacian module that we will define are modules over the ring $\La = \Z[x_1^{\pm 1}, y_1^{\pm 1}, \ldots, x_g^{\pm 1}, y_g^{\pm 1}]$ of Laurent polynomials. In this section we will let $\C$ and $\L_G$ denote these modules.

%
%
Again, we view $S$ as a $2g$-gon with sides $x_1, y_1, \ldots, x_g, y_g$ identified. We may also think of the $2g$-gon as a fundamental region of the universal abelian  cover $\tilde S$.  In order to define the  module $\C$,
label regions of the diagram by $A, B,C, \ldots$. If a region $A$ of $\D$ is divided into several subregions in the $2g$-gon, then we choose one subregion to receive the label $A$. Assume that $A'$ is another subregion. If $w$ is the element of $A(\tilde S)$ such that $A$ and $A'$ are in the same region of $\tilde S$, 
then replace $A'$ by the label $wA$. (An example is seen in Figure \ref{WDex}.) The \textit{Dehn coloring module} $\C$ is a module over the group ring $\La$ with generators $A, B, C, \ldots$. Defining relations are as for the Dehn coloring group (Figure \ref{Dehn coloring}).  We will once more refer to shaded and unshaded generators.

 The \textit{Laplacian matrix} is given by $L_G=\delta-A$ where $\delta$ is as before, but the adjaceny matrix $A$ now has coefficients in $\La$.  Edge weights $\pm 1$ are replaced by $\pm w$, where $w$ is the element of $A(\tilde S)$ determined by following the edge from $v_i$ to $v_j$. (See Example \ref{example}.)
The \textit{Laplacian module} $\L_G$ is the cokernel of the matrix.

It is reassuring to note that if all generators $x_k, y_k$ are set equal to $1$, then $\C$, $L_G$ and $\L_G$ become the Dehn coloring group, Laplacian matrix  and Laplacian group, respectively, of the previous section.  Using Reidemeister moves we see that $\C$ and $\L_G$ are link invariants. The reader can verify this or see the proof given in \cite{SW19'}.

The \textit{Laplacian polynomial} $\De _G$ is defined to be the module order of $\L_G$, which is found as the determinant of $L_G$.
When $G$ is replaced by the dual graph $G^*$, we obtain the module order $\De_{G^*}$. The pair $\{\De_G, \De_{G^*}\}$ is an invariant of the link. While the two polynomials are not generally equal (see \cite{SW19'}) we have the following. 

\begin{prop} \label{same} Let $\ell$ be a checkerboard shaded link diagram in a  torus. If $G, G^*$ are its Tait graphs, then $\De_G = \De_{G^*}$.
\end{prop}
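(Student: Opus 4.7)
By applying Reidemeister moves, I would first reduce to the case where every region of $\D$ is a contractible disk, so that $G$ and $G^*$ are dual $1$-skeleta of a CW decomposition of the torus; this is harmless because $\De_G, \De_{G^*}$ are Reidemeister invariants. Paralleling Proposition~\ref{iso}, I would then consider the short exact sequence of $\La$-modules
$$0 \to \C^s \to \C \to \C/\C^s \to 0,$$
together with its analogue for $\C^u$, and identify $\C^s$ with $\L_G$ (and $\C^u$ with $\L_{G^*}$). A relation purely among vertex generators in $\C$ arises from a $\La$-linear combination of Dehn relations in which every face generator cancels; equivalently, it is a $1$-cycle in the lift $\tilde{G^*}$ of $G^*$ to the universal abelian cover $\tilde S = \R^2$. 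Because $\tilde S$ is simply connected, every such cycle is a sum of face boundaries of $\tilde{G^*}$, each of which is a lift of a face of $G^*$, hence corresponds to a vertex of $G$. The resulting relations are exactly the Laplacian relations $R_v$, so $\C^s \cong \L_G$, and symmetrically $\C^u \cong \L_{G^*}$.

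Next I would analyze the quotient $\C/\C^s$. There every Dehn relation takes the form $u_1 = \chi\, u_2$ for a pair of face generators across a crossing and a deck-transformation factor $\chi \in \La^\times$. Hence $\C/\C^s$ is a cyclic $\La$-module whose annihilator is generated by $\{1 - \chi(\g) : \g \text{ a closed loop in } G^*\}$, where $\chi: \pi_1(S) \to \La^\times$ is the canonical character. The cellular chain complex of the CW decomposition having $G^*$ as $1$-skeleton shows that $H_1(G^*; \Z) \to H_1(S; \Z) = \Z^2$ is surjective, so this annihilator equals the ideal $(1 - x_1, 1 - y_1)$. Therefore $\C/\C^s \cong \La/(1-x_1, 1-y_1) \cong \Z$ with trivial $\La$-action; symmetrically $\C/\C^u \cong \Z$.

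Finally, I would invoke the standard multiplicativity of module orders (gcds of zeroth Fitting ideals) in short exact sequences of finitely generated $\La$-modules over the UFD $\La$. This yields
$$\De_G \cdot \mathrm{ord}(\Z) \;=\; \mathrm{ord}(\C) \;=\; \De_{G^*} \cdot \mathrm{ord}(\Z).$$
Since $1-x_1$ and $1-y_1$ are distinct irreducibles of $\La$, we have $\mathrm{ord}(\Z) = \gcd(1-x_1, 1-y_1) = 1$, and we conclude $\De_G = \De_{G^*}$ up to units of $\La$. The main obstacle I expect is the identification $\C^s \cong \L_G$: one must carefully verify, via the simple-connectedness of $\tilde S$, that no extra relations appear in $\C^s$ beyond the Laplacian ones $R_v$ (in particular, that non-contractible loops in $G^*$ do not contribute new vertex-generator relations to $\C^s$). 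Once this structural fact is secured, the computation of $\C/\C^s$ and the Fitting-ideal bookkeeping that finishes the proof are routine.
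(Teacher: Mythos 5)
Your proposal is correct and follows essentially the same route as the paper: identify $\C^s$ with $\L_G$ by showing the only vertex-generator relations come from contractible loops (using that the universal abelian cover of the torus is the simply connected plane), show $\C/\C^s \cong \Z$ has trivial order, and apply multiplicativity of module orders to the short exact sequence, concluding by symmetry with $G^*$. Your more explicit justification of $\C^s \cong \L_G$ via $1$-cycles in the lifted dual graph, and your computation of the annihilator $(1-x_1,1-y_1)$ for the quotient, are welcome elaborations of steps the paper only sketches.
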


\begin{proof} As in Section \ref{color}, we let $\C^s$ be the submodule of $\C$ generated by shaded generators. 
The argument that follows Definition \ref{lapdef} can be modified to show that relations in $\C^s$ are obtained by eliminating unshaded generators along null-homologous closed paths in $S$. In particular, the relations $R_v$ corresponding to rows of $L_G$ arise from small loops encircling vertices of $G$. (To see this, it helps to view the $2g$-gon within the universal abelian cover $\tilde S$.) 
Since $S$ is assumed to be a torus, all null-homologous closed paths  are contractible. The relations $R_v$ suffice to generate the relations of $\C^s$. Hence $\C^s \cong \L_G$. 

Consider the effect on the module $\C$ of setting all shaded generators equal to $0$. Using Dehn relations, we find that all of the unshaded generators are equal; that is, $\C/\C^s \cong \Z$.
We have the short exact sequence: 

\begin{equation} 0 \to \C^s \to \C \to \Z \to 0 \end{equation} 

The module order $\De_0(\C)$  is equal to the product of the orders of $\C^s$ and $\Z$.  (See \cite{Mi68}, for example). The module order of $\C^s$ is $\De_G$ while that of $\Z$ is 1. Hence $\De_0(\C) = \De_G$. 

Replacing $G$ with its dual (or equivalently, reversing the checkerboard shading), yields $\De_0(\C) = \De_{G^*}$. Hence $\De_G = \De_{G^*}.$
\end{proof} 

\begin{example} \label{example}  Consider the diagram $\D$ of a 3-component link in the thickened torus that appears in Figure \ref{WDex}(i) with generators indicated for the Wirtinger group $\pi_{wirt}$. 

\begin{figure}[H]
\begin{center}
\includegraphics[height=4 in]{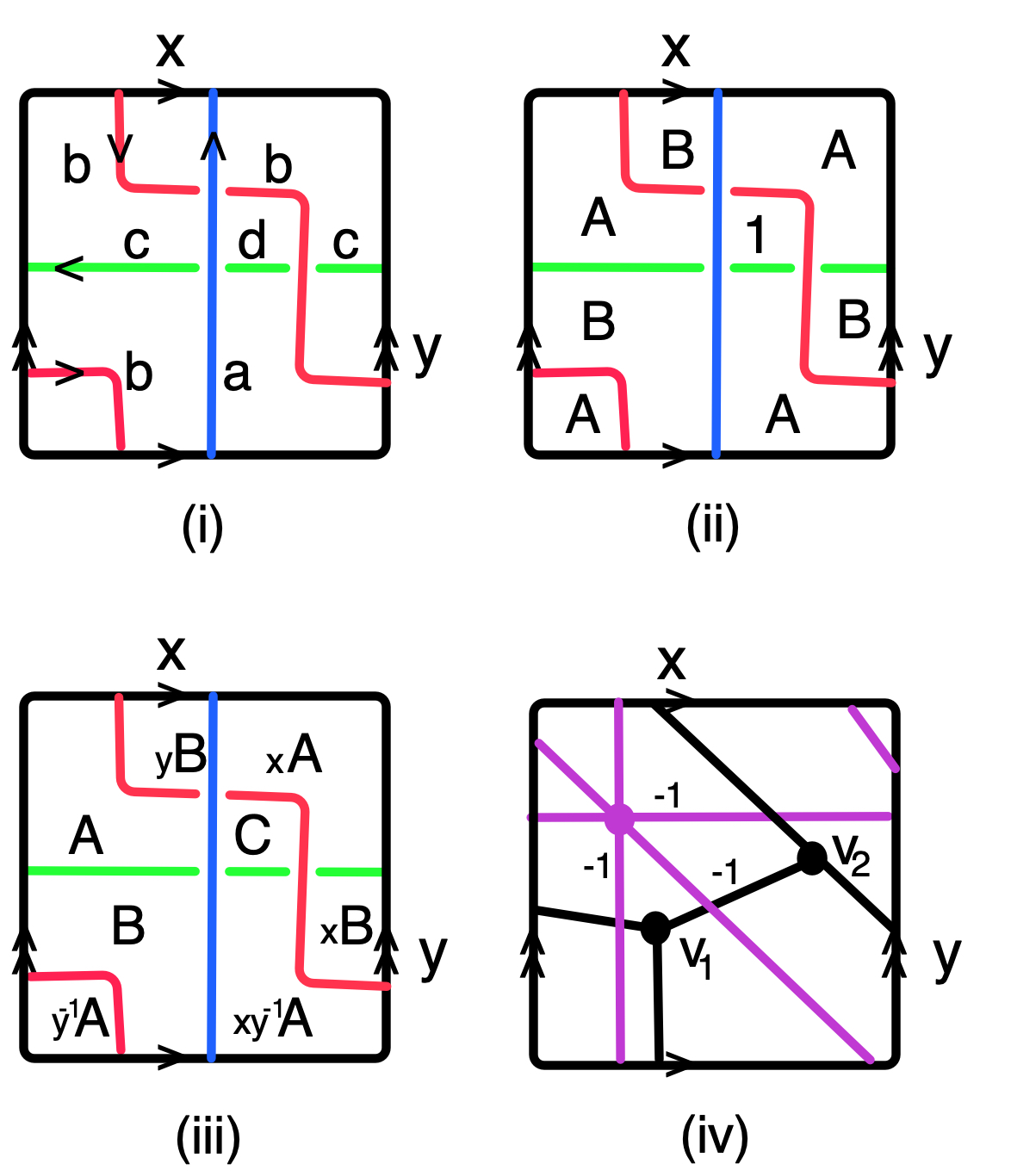}
\caption{3-component link: (i) generators for $\pi_{wirt}$; (ii) generators for $\pi_{dehn}$; (iii) module generators for $\C$; (iv) Tait graphs $G$ (black) and $G^*$ (purple). }
\label{WDex}
\end{center}
\end{figure}

One checks that 
$$\pi_{wirt} \cong \langle a, b, c, d \mid ab=ba,\ ac =da, \ bc=db \rangle.$$ 

The group $\pi_{wirt}$ is more easily recognized by eliminating a generator and introducing a new generator via Tietze transformations. We use the last relation, written as $d = b c b^{-1}$, to eliminate $d$. Then introduce $e = a^{-1}b$, and eliminate $b$. Consequently, 
$$\pi_{wirt} \cong \langle a, c, e \mid ae=ea,\ ce=ec \rangle,$$
which is the free product of two copies of $\Z^2$ amalgamated over the infinite cyclic subgroup generated by $e$. \smallskip

Theorem \ref{DW} implies that $\pi_{dehn}$ is isomorphic to the quotient of $\pi_{wirt}$ by the 
relations $a=b, b=c$, and hence $\pi_{dehn} \cong \Z$. 
We can see this directly. Using  
the generators of Figure \ref{WDex}(ii), we have:
$$\pi_{dehn} \cong \langle A, B \mid A =A^{-1}B, \ B^{-1}A=A^{-1}, \ A^{-1}B=A \rangle \cong \langle A, B \mid B = A^2 \rangle \cong \langle A \mid \, \rangle \ \cong \Z.$$

With the labeled generators of Figure \ref{WDex}(iii), we obtain a presentation of the Dehn coloring module:

$$\C\cong \langle A, B, C \mid  A + C  = B + xy^{-1} A,\  yB + xA= A + C,\ C + xA = xy^{-1} A + x B  \rangle.$$
Its module order is
$$\De_0(\C) = 2-x-x^{-1}+y+y^{-1} - x y^{-1}-x^{-1}y.$$

By Proposition \ref{same} the module order $\De_0(\C)$ agrees with both $\De_G$ and $\De_G^*$. 
This is easily verified: the polynomial $\De_G$ is the determinant of 
$$\begin{pmatrix} 1 & 1-x^{-1} -y^{-1}  \\ 1-x -y& 1 \end{pmatrix}.$$ To see the first row of the matrix, for example, note that vertex $v_1$ in $G$ is incident to $x^{-1} v_2, y^{-1}v_2$ by edges of weight $1$, and it is incident to $v_2$ by an edge of weight $-1$. 

Similarly, $\De_{G^*}$ is the determinant of the $1 \times 1$ matrix 
$$(2-x-x^{-1}+y+y^{-1}-xy^{-1}-x^{-1}y)$$.
\end{example}

\begin{remark} (i) The modules $\L_G$ and $\L_{G^*}$ in Example \ref{example} are easily seen to be isomorphic. (Simply eliminate one of the generators from the presentation for $\L_G$.) For arbitrary links in a thickened torus, however, the two modules need not be isomorphic. See Example 3.8 of \cite{SW19'}. \smallskip

(ii) The graph $G$ in Example \ref{example} is sometimes called a ``theta graph." Note that one edge has weight $-1$. By changing the location of the weight to different edges, we obtain three theta graphs $G_1, G_2, G_3$ corresponding to 3-component links $\ell_1, \ell_2, \ell_3$ (respectively) in the thickened torus $S$. As discussed in \cite{SW19'}, each of the links can be transformed into any other by a homeomorphism of $S\times I$, but they cannot be transformed by isotopy. The latter claim is seen by computing the three polynomials $\De_{G_1}, \De_{G_2}, \De_{G_3}$, which are easily seen to be different. \end{remark} 

(iii) The Dehn coloring group and the Laplacian group are invariants of the link up to homeomorphism of the thickened surface. On the other hand, the Laplacian module and polynomial require a homology basis $x_i, y_i$ for $S$. Such a basis acts in like a coordinate system. With it we can compare links in $S$. 
However, if we regard $\L_G$ and $\De_G$ up to symplectic change of basis, then they become invariants that are independent of the choice of basis. See \cite{SW19'} for details.

\bigskip

\ni Department of Mathematics and Statistics,\\
\ni University of South Alabama\\ Mobile, AL 36688 USA\\
\ni Email: \\
\ni  silver@southalabama.edu\\
\ni swilliam@southalabama.edu

\end{document}